\documentclass[12pt]{amsart}%
\usepackage{amsmath}
\usepackage{amsfonts}
\usepackage{amssymb}
\usepackage{verbatim}
\usepackage{graphicx}%
\setcounter{MaxMatrixCols}{30}
%TCIDATA{OutputFilter=latex2.dll}
%TCIDATA{Version=5.50.0.2960}
%TCIDATA{CSTFile=amsart.cst}
%TCIDATA{Created=Wednesday, September 18, 2013 10:03:25}
%TCIDATA{LastRevised=Friday, May 15, 2015 11:38:26}
%TCIDATA{<META NAME="GraphicsSave" CONTENT="32">}
%TCIDATA{<META NAME="SaveForMode" CONTENT="1">}
%TCIDATA{BibliographyScheme=Manual}
%TCIDATA{<META NAME="DocumentShell" CONTENT="Standard LaTeX\Blank - Standard LaTeX Article">}
%TCIDATA{ComputeDefs=
%$x$
%}
%BeginMSIPreambleData
\providecommand{\U}[1]{\protect\rule{.1in}{.1in}}
%EndMSIPreambleData
\newtheorem{theorem}{Theorem}

\setlength{\oddsidemargin}{0.25in}
\setlength{\evensidemargin}{0.25in}
\setlength{\textwidth}{6in}
\setlength{\topmargin}{-0in}
\setlength{\textheight}{8.5in}
\numberwithin{equation}{section}
\begin{document}
\title[Liouville property]{A\ Liouville property for gradient graphs and a Bernstein problem for
Hamiltonian stationary equations. \ }

\begin{abstract}
Using an rotation of Yuan, we observe that the gradient graph of any
semiconvex function is a Liouville manifold, that is, does not admit bounded
harmonic functions. \ As a corollary, we find that any solution of the fourth
order Hamiltonian stationary equation satisfying
\[
\theta\geq\left(  n-2\right)  \frac{\pi}{2}+\delta
\]
for some $\delta>0$ must be a quadratic. \ 

\end{abstract}

\thanks{The author is  partially supported by NSF Grant DMS-1438359. }
\maketitle

\bigskip

\bigskip

In this short note, we record the following.

\begin{theorem}
Suppose that $u$ is semi-convex. \ Then the gradient graph
\[
\Gamma=\left\{  \left(  x,Du(x)\right)  :x\in%
%TCIMACRO{\U{211d} }%
%BeginExpansion
\mathbb{R}
%EndExpansion
^{n}\right\}
\]
with the induced submanifold metric, is a Liouville manifold.
\end{theorem}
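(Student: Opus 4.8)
The plan is to reduce the Liouville property to a classical Liouville theorem for uniformly elliptic divergence-form equations on $\R^n$, the passage between the two being effected by the rotation of Yuan. First I would record the shape of the induced metric. Writing $F(x)=(x,Du(x))$, the tangent frame is given by the columns of $\binom{I}{\hess u}$, so the induced metric is $g = I + (\hess u)^2$ (using that $\hess u$ is symmetric), and $\Gamma$ is a Lagrangian submanifold of $\R^{2n}$ with its standard structure. A bounded harmonic function $f$ on $(\Gamma,g)$ solves the Laplace--Beltrami equation, which in the $x$-coordinates takes the divergence form $\partial_i(\sqrt{\det g}\, g^{ij}\partial_j f)=0$. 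This is uniformly elliptic on $\R^n$ precisely when the eigenvalues of $\hess u$ are bounded, which semiconvexity alone does not give (only a lower bound), so a preliminary normalization is needed.

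The normalization is Yuan's rotation. Semiconvexity means $\hess u \geq -\Lambda I$ for a fixed $\Lambda\ge 0$, so the eigenvalues $\lambda_j$ of $\hess u$ satisfy $\arctan\lambda_j \in [-\arctan\Lambda,\ \pi/2)$, an interval of length $L=\arctan\Lambda + \pi/2<\pi$. Viewing $\R^{2n}=\mathbb{C}^n$, the isometry $U=\cos\theta\, I + \sin\theta\, J$ (multiplication by $e^{i\theta}$, with $J$ the complex structure) preserves both the metric and the Lagrangian condition and sends the Lagrangian angle of each eigendirection from $\arctan\lambda_j$ to $\theta + \arctan\lambda_j$; equivalently the new slope is $\tan(\theta+\arctan\lambda_j)$. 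Choosing $\theta = \tfrac12\arctan\Lambda - \tfrac{\pi}{4}$ centers the angle interval at $0$, giving new angles in $[-L/2,\ L/2)$ with $L/2<\pi/2$. I would then check that $U(\Gamma)$ is again a gradient graph $\{(\bar x, D\bar u(\bar x))\}$ over $\R^n$, with $\hess\bar u$ having eigenvalues in $[-C,C]$, $C=\tan(L/2)$. Since $U$ is an ambient isometry, $(\Gamma,g)$ is isometric to this new graph, whose induced metric $\bar g = I + (\hess\bar u)^2$ obeys $I \le \bar g \le (1+C^2)I$ and is thus uniformly equivalent to the Euclidean metric.

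With uniform ellipticity in hand, I would finish with the De Giorgi--Nash--Moser theory. Transplanting $f$ to the rotated graph, it solves $\partial_i(\sqrt{\det\bar g}\,\bar g^{ij}\partial_j f)=0$ on all of $\R^n$, a divergence-form equation that is uniformly elliptic with bounded (indeed continuous) coefficients; crucially, the ellipticity constants are scale invariant. Moser's Harnack inequality therefore holds on every ball $B_{2R}$ with a constant $C_H$ independent of $R$. Applying it to the nonnegative solution $v = \sup_{\R^n} f - f$ gives $\sup_{B_R} v \le C_H \inf_{B_R} v$; letting $R\to\infty$, the right side tends to $C_H\inf_{\R^n} v = 0$ while the left side increases to $\sup_{\R^n} v$, forcing $f$ to be constant. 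Hence $(\Gamma,g)$ admits no nonconstant bounded harmonic function.

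The main obstacle is the rotation step: one must verify that after applying $U$ the rotated Lagrangian $U(\Gamma)$ is globally the gradient graph of a single potential $\bar u$ over the whole base $\R^n$. Local graphicality is immediate from the angle bound $|\theta + \arctan\lambda_j|<\pi/2$, since the tangent planes never turn vertical, but promoting this to a global diffeomorphism onto $\R^n$, together with the existence of the potential, is exactly where Yuan's rotation lemma does its work. Everything downstream --- the uniform two-sided bound on $\bar g$ and the scale-invariant Harnack --- then follows mechanically.
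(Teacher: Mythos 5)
Your proposal is correct, and its skeleton --- Yuan's rotation to achieve uniform ellipticity, followed by the scale-invariant De Giorgi--Nash--Moser Harnack inequality on balls $B_R$ with $R\to\infty$ --- is exactly the paper's. The difference lies in how the rotation is implemented, and it is instructive. You rotate the ambient $\mathbb{C}^n$ by $e^{i\theta}$ and require that $U(\Gamma)$ be, globally, the gradient graph of a new potential $\bar u$ with bounded Hessian; as you yourself note, global graphicality and the existence of $\bar u$ then become the crux, and you defer them to ``Yuan's rotation lemma.'' The paper sidesteps this obstacle entirely: it never rotates the graph, only the parametrization. With $\alpha=\delta/n$ it sets $T(x)=\cos\alpha\,x+\sin\alpha\,Du(x)$ (just the horizontal component of your $U$); semiconvexity gives $DT\geq cI>0$, hence $T$ is a diffeomorphism of $\mathbb{R}^n$, and then $y\mapsto\bigl(T^{-1}(y),\,Du(T^{-1}(y))\bigr)$ re-parametrizes the \emph{original} graph $\Gamma$ with both blocks of the differential uniformly bounded (the second by $1/\sin\alpha$). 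This already yields $I\leq g\leq CI$ in the $y$-coordinates, which is all the Harnack argument needs; no new potential, and indeed no Lagrangian or symplectic structure, is ever invoked. So the step you single out as the main obstacle is genuine for your formulation but avoidable: the Liouville property requires only a bi-Lipschitz global parametrization of $\Gamma$ by $\mathbb{R}^n$, not the stronger statement that the rotated submanifold is again a gradient graph. (Your version does buy something extra --- an honest isometric representation of $\Gamma$ as a gradient graph with bounded Hessian, which is what Yuan's Bernstein-type arguments ultimately use --- but it is more than Theorem 1 needs.) The Harnack endgame is identical in both arguments up to normalization: you apply it to $\sup f - f$, the paper to $f-\inf f$.
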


\ Recall that a manifold has the Liouville property if all bounded harmonic
functions are constant. \ A rotation of Yuan \cite{YYI} allows us to write the
Laplace operator as a uniformly elliptic divergence operator. \ The result
then follows readily from the De-Giorgi-Nash-Moser theory.

We are interested in studying a fourth order special Lagrangian type equation.
Let $\lambda_{i}$ be the eigenvalues of the Hessian $D^{2}u.$ \ The lagrangian
phase is given by
\[
\theta=\sum_{i=1}^{n}\arctan\lambda_{i}.
\]
We extend the following generalization of theorems of Yuan \cite{YYI}%
\cite{YY06}.

\begin{theorem}
Let $g$ be the metric induced on $\Gamma=\left(  x,Du(x)\right)  .$ \ Suppose
that $u$ is an entire solution of the fourth order Hamiltonian stationary
equation
\begin{equation}
\Delta_{g}\theta=0 \label{Hamstat}%
\end{equation}
with
\begin{equation}
\left\vert \theta(u)\right\vert >\left(  n-2\right)  \frac{\pi}{2}%
+\delta\label{cond}%
\end{equation}
or
\[
D^{2}u\geq0.
\]
Then $u$ is a quadratic function, that is, $\Gamma$ is a plane.
\end{theorem}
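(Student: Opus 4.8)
The plan is to reduce the fourth-order problem to Yuan's second-order Bernstein theorems by means of Theorem 1. The central observation is that equation (\ref{Hamstat}) says precisely that the Lagrangian phase $\theta$ is a harmonic function on $(\Gamma,g)$. Since each eigenvalue contributes $\arctan\lambda_i\in(-\tfrac{\pi}{2},\tfrac{\pi}{2})$, the phase is automatically bounded, $|\theta|<n\tfrac{\pi}{2}$. Thus $\theta$ is a bounded harmonic function on $\Gamma$, and once $\Gamma$ is known to be a Liouville manifold this forces $\theta$ to be constant, collapsing (\ref{Hamstat}) to the special Lagrangian equation $\sum_i\arctan\lambda_i=c$.

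First I would check that each hypothesis forces $u$ (or $-u$) to be semi-convex, so that Theorem 1 applies. Order $\lambda_1\geq\cdots\geq\lambda_n$. If $\theta\geq(n-2)\tfrac{\pi}{2}+\delta$, then using $\arctan\lambda_i<\tfrac{\pi}{2}$ for $i<n$,
\[
\arctan\lambda_n>\theta-(n-1)\tfrac{\pi}{2}\geq-\tfrac{\pi}{2}+\delta,
\]
so $\lambda_n>\tan(-\tfrac{\pi}{2}+\delta)=-\cot\delta$ and hence $D^2u\geq-\cot(\delta)\,I$. Because $\theta$ is continuous and $\mathbb{R}^n$ is connected, the sign of $\theta$ is constant under (\ref{cond}) (for $n\geq2$), so either this bound holds everywhere or, applying the same computation to $-u$ (under which $\lambda_i\mapsto-\lambda_i$ and $\theta\mapsto-\theta$), the mirror bound $D^2u\leq\cot(\delta)\,I$ holds. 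The convex case $D^2u\geq0$ is already semi-convex. In every case $u$ or $-u$ is semi-convex, and replacing $u$ by $-u$ does not change the isometry type of $\Gamma$.

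With semi-convexity in hand, Theorem 1 yields that $\Gamma$ is a Liouville manifold, so the bounded harmonic function $\theta$ must be constant, say $\theta\equiv c$. The problem now reduces to classifying entire solutions of the second-order special Lagrangian equation $\sum_i\arctan\lambda_i=c$ under the standing hypotheses. Under $D^2u\geq0$ this is Yuan's convex Bernstein theorem \cite{YYI}; under the supercritical phase bound $|c|>(n-2)\tfrac{\pi}{2}$ this is the Bernstein theorem for supercritical phase \cite{YY06}. Either way $u$ is a quadratic polynomial, i.e.\ $\Gamma$ is a plane.

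I expect the only genuinely substantive analytic inputs to be packaged already: the De Giorgi--Nash--Moser argument inside Theorem 1 and the two cited Bernstein theorems of Yuan, which I would invoke as black boxes. The new content is the passage from the fourth-order equation to constant phase, and the one step to get exactly right is the elementary eigenvalue estimate above, since it is precisely what matches the supercritical phase hypothesis (\ref{cond}) to the semi-convexity required by Theorem 1. The constant additive shift of $\theta$ under Yuan's rotation is irrelevant here, since we use only that $\theta$ is bounded and $g$-harmonic.
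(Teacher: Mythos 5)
Your proposal is correct and follows essentially the same route as the paper: use the phase condition (or convexity) to deduce semi-convexity via the elementary eigenvalue estimate, invoke Theorem 1 to conclude the bounded $g$-harmonic function $\theta$ is constant, and finish with Yuan's Bernstein theorems for the special Lagrangian equation. Your version is in fact slightly more careful than the paper's (you write the bound correctly as $\lambda_n>\tan(\delta-\tfrac{\pi}{2})=-\cot\delta$, where the paper has a typo with $\arctan$ in place of $\tan$, and you make explicit the sign-constancy of $\theta$ needed for the oddness reduction).
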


For a fixed bounded domain $\Omega\subset$ $%
%TCIMACRO{\U{211d} }%
%BeginExpansion
\mathbb{R}
%EndExpansion
^{n}$ consider the volume functional
\[
F_{\Omega}(u)=\int_{\Omega}\sqrt{\det\left(  I+\left(  D^{2}u\right)
^{T}D^{2}u\right)  }dx.
\]
A function $u$ is critical for $F_{\Omega}(u)$ under compactly supported
variations of the scalar function if and only if $u$ satisfies the equation
(\ref{Hamstat})\ c.f \cite[Proposition 2.2]{SW}. \ In other words, the
gradient graph of $u$ has smallest volume compared with other gradient graphs.
\ \ Recall that if $u$ satisfies the special Lagrangian equation \cite{HL}
\begin{equation}
D\theta=0\label{sLag}%
\end{equation}
then $u$ is critical under \textit{all} variations of the surface.

The Liouville property, together with (\ref{cond}) will force $\theta$ to
satisfy (\ref{sLag}), so $\Gamma$ is a minimal surface. It then follows
immediatly from a result of Yuan that $\Gamma$ is a plane. \ \ For Bernstein
results for (\ref{Hamstat}) with a volume growth constraint, and more
discussion of the problem, see \cite{Mese}.

\section{\bigskip Proof}

\subsection{Proof of Theorem 1}

\ If $u$ is semiconvex, then there exists a value $M$ such that
\[
D^{2}u+MI_{n}\geq0.
\]
It follows that
\[
\arctan\lambda_{i}\geq-\arctan M
\]
for all $\lambda_{i}.$

Letting%
\[
\delta=\frac{\pi}{2}-\arctan M>0,
\]
and
\[
D^{2}u\geq\tan\left(  \delta-\frac{\pi}{2}\right)  .
\]
The Yuan rotation from \cite[section 2]{YY06} is as follows.  \ 

Consider the map
\[
T(x)=\cos\left(  \frac{\delta}{n}\right)  x+\sin\left(  \frac{\delta}%
{n}\right)  Du(x).
\]
Differentiating
\begin{align}
DT &  =\cos\left(  \frac{\delta}{n}\right)  I+\sin\left(  \frac{\delta}%
{n}\right)  D^{2}u(x)\label{11}\\
&  \geq\cos\left(  \frac{\delta}{n}\right)  I+\sin\left(  \frac{\delta}%
{n}\right)  \tan\left(  \delta-\frac{\pi}{2}\right)  I\nonumber\\
&  =\cos\left(  \frac{\delta}{n}\right)  \left(  1+\tan\left(  \frac{\delta
}{n}\right)  \tan\left(  \delta-\frac{\pi}{2}\right)  \right)  I.\nonumber
\end{align}

Recalling the formula
\[
\tan(\alpha-\beta)=\frac{\tan\alpha-\tan\beta}{1+\tan\alpha\tan\beta}%
\]
we see%
\[
1+\tan\left(  \frac{\delta}{n}\right)  \tan\left(  \delta-\frac{\pi}%
{2}\right)  =\frac{\tan\left(  \frac{\delta}{n}\right)  -\tan\left(
\delta-\frac{\pi}{2}\right)  }{\tan\left(  \frac{\delta}{n}-\left(
\delta-\frac{\pi}{2}\right)  \right)  }=\frac{\tan\left(  \frac{\delta}%
{n}\right)  +\tan\left(  \frac{\pi}{2}-\delta\right)  }{\tan\left(  \frac{\pi
}{2}-\delta\frac{n-1}{n}\right)  }.
\]
It follows that%
\[
DT\geq\cos\left(  \frac{\delta}{n}\right)  \frac{\tan\left(  \frac{\delta}%
{n}\right)  +\tan\left(  \frac{\pi}{2}-\delta\right)  }{\tan\left(  \frac{\pi
}{2}-\delta\frac{n-1}{n}\right)  }I>0,
\]
and the map
\[
T:%
%TCIMACRO{\U{211d} }%
%BeginExpansion
\mathbb{R}
%EndExpansion
^{n}\rightarrow%
%TCIMACRO{\U{211d} }%
%BeginExpansion
\mathbb{R}
%EndExpansion
^{n}%
\]
is a diffeomorphism. \ \ 

Next consider the map
\[
\tilde{D}=Du\circ T^{-1}.
\]
By (\ref{11}),
\[
D\tilde{D}(y)=D^{2}u(T^{-1}(y))\left[  \cos\left(  \frac{\delta}{n}\right)
I+\sin\left(  \frac{\delta}{n}\right)  D^{2}u(T^{-1}(y))\right]  ^{-1}.
\]
Diagonalizing $D^{2}u$ at $T^{-1}(y)$ we see
\[
D\tilde{D}|_{y}\leq\max_{i}\frac{\lambda_{i}}{\cos\left(  \frac{\delta}%
{n}\right)  +\sin\left(  \frac{\delta}{n}\right)  \lambda_{i}}\leq\frac
{1}{\sin\left(  \frac{\delta}{n}\right)  }=M_{0}<\infty.
\]
So the map
\[
G:%
%TCIMACRO{\U{211d} }%
%BeginExpansion
\mathbb{R}
%EndExpansion
^{n}\rightarrow\Gamma\subset%
%TCIMACRO{\U{211d} }%
%BeginExpansion
\mathbb{R}
%EndExpansion
^{n}\times%
%TCIMACRO{\U{211d} }%
%BeginExpansion
\mathbb{R}
%EndExpansion
^{n}%
\]
given by
\[
G(x)=\left(  T^{-1}(x),Du\circ T^{-1}\right)
\]
is a diffeomorphism onto the gradient graph $\Gamma.$ Thus the pulled-back
metric is given by
\[
g=I_{n}+G^{\ast}\bar{g}%
\]
and satisfies
\[
I_{n}\leq g\leq(1+M_{0}^{2})I_{n}.
\]
It follows that the Laplacian, given by
\[
\Delta_{g}f=\frac{\partial_{j}\left(  \sqrt{\det g}g^{ij}\partial_{i}f\right)
}{\sqrt{\det g}},
\]
is a uniformly elliptic divergence type operator. \ 

The remaing proof is standard, but we include it for completeness. \ 

We recall the Harnack inequality of De-Giorgi-Nash-Moser (cf \cite[Theorem
8.20]{GT})

\begin{theorem}
Let $u\geq$ $0$ be a solution of
\[
\partial_{j}\left(  a^{ij}(x)\partial_{i}f(x)\right)  =0
\]
on $B_{4}(0)$ with
\[
0<\varepsilon I_{n}\leq a^{ij}\leq\frac{1}{\varepsilon}I_{n}.
\]
There exists a constant $C$ such that
\[
\sup_{B_{1}(0)}f\leq C\inf_{B_{1}(0)}f.
\]

\end{theorem}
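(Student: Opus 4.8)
The statement is the classical De Giorgi--Nash--Moser Harnack inequality for uniformly elliptic equations in divergence form, and the plan is to prove it by Moser's iteration. Throughout I would work with the weak formulation $\int a^{ij}\partial_i f\,\partial_j\varphi\,dx=0$ for all $\varphi\in C_c^\infty$, reduce to a strictly positive solution by replacing $f$ with $f+\sigma$ (still a solution, since constants lie in the kernel) and letting $\sigma\to0$ at the end, and exploit only the ellipticity bounds $\varepsilon I_n\le a^{ij}\le\varepsilon^{-1}I_n$, the Sobolev inequality, and the Poincar\'e inequality; the constant $C$ will depend only on $n$ and $\varepsilon$.

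First I would derive the basic Caccioppoli (energy) estimate by testing the equation against $\varphi=\eta^2 f^{\beta}$, where $\eta$ is a cutoff function and $\beta\neq-1$ is a real exponent. Using $\nabla\varphi=2\eta f^\beta\nabla\eta+\beta\eta^2 f^{\beta-1}\nabla f$ and the ellipticity to absorb the gradient term, one obtains
\[
\int \eta^2\,\bigl|\nabla\bigl(f^{(\beta+1)/2}\bigr)\bigr|^2\,dx\le C(\beta)\int|\nabla\eta|^2\,f^{\beta+1}\,dx .
\]
Combining this with the Sobolev inequality $\|w\|_{L^{2\chi}}\le C\|\nabla w\|_{L^2}$, with $\chi=n/(n-2)$ for $n\ge3$, applied to $w=\eta f^{(\beta+1)/2}$, yields a reverse-H\"older improvement: the $L^{(\beta+1)\chi}$ average of $f$ on a smaller ball is controlled by its $L^{\beta+1}$ average on a larger one. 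Iterating over the geometric sequence of exponents $\chi^k$ and a shrinking sequence of radii (summing the convergent geometric series of constants) produces, for any fixed $p_0>0$, the local sup-bound for subsolutions
\[
\sup_{B_r}f\le C\left(\frac{1}{|B_{2r}|}\int_{B_{2r}}f^{p_0}\,dx\right)^{1/p_0},
\]
and, running the same iteration with negative exponents $\beta+1<0$, the companion inf-bound for supersolutions $\inf_{B_r}f\ge c\,\bigl(\frac{1}{|B_{2r}|}\int_{B_{2r}}f^{-p_0}\,dx\bigr)^{-1/p_0}$.

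The hard part, and the conceptual heart of the argument, is to bridge the positive-power and negative-power averages, i.e.\ to produce a single exponent $p_0>0$ with
\[
\left(\frac{1}{|B|}\int_{B}f^{p_0}\right)\left(\frac{1}{|B|}\int_{B}f^{-p_0}\right)\le C .
\]
For this I would test the equation against $\varphi=\eta^2 f^{-1}$ to obtain the logarithmic Caccioppoli estimate $\int\eta^2|\nabla\log f|^2\le C\int|\nabla\eta|^2$, which together with the Poincar\'e inequality shows that $w=\log f$ lies in $\mathrm{BMO}$ with norm bounded in terms of $n,\varepsilon$ alone. The John--Nirenberg inequality then provides $p_0,C$ so that $\int_B e^{p_0|w-\bar w|}\le C|B|$, and splitting into the $\pm$ parts gives exactly the displayed product bound.

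Chaining the three estimates,
\[
\sup_{B_1}f\le C\left(\frac{1}{|B_2|}\int_{B_2}f^{p_0}\right)^{1/p_0}\le C\left(\frac{1}{|B_2|}\int_{B_2}f^{-p_0}\right)^{-1/p_0}\le C\inf_{B_1}f,
\]
which is the assertion; the ball $B_4$ in the hypothesis supplies the room needed for the cutoffs in the iteration. The only place requiring separate comment is $n=2$, where $\chi$ must be replaced by an arbitrary finite exponent (or an Orlicz refinement), but the iteration scheme is unchanged. The main obstacle is precisely this bridging step: a direct Moser iteration cannot cross the singular exponent $p=0$, since $C(\beta)$ blows up as $\beta+1\to0$, so the $\mathrm{BMO}$/John--Nirenberg argument is essential to supply the missing link between the two one-sided bounds.
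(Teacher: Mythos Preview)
Your outline is a correct sketch of Moser's iteration proof of the Harnack inequality: the Caccioppoli estimate from the test function $\eta^2 f^\beta$, the Sobolev-driven reverse H\"older iteration on both sides of $p=0$, and the logarithmic Caccioppoli/John--Nirenberg bridge across $p=0$ are exactly the standard ingredients, and your handling of the constants and the $n=2$ caveat is appropriate.

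However, there is nothing to compare against: the paper does not prove this statement at all. It is stated as a quoted result (``We recall the Harnack inequality of De-Giorgi--Nash--Moser (cf.\ [GT, Theorem 8.20])'') and used as a black box in the Liouville argument that follows. So your proposal is not the paper's approach---the paper's ``approach'' is simply to cite Gilbarg--Trudinger---but it is a correct self-contained proof of the cited theorem, and indeed essentially the proof one finds in that reference.
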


We may assume that either $f$ or ($-f$ )is bounded below, and we may add a
constant and assume $\inf f=0.$ \ Notice that
\[
f_{R}(x)=f\left(  \frac{x}{R}\right)
\]
is a solution of the equation
\[
\partial_{j}\left(  a^{ij}\left(  \frac{x}{R}\right)  \partial_{i}%
f_{R}(x)\right)  =0
\]
so satisfies the hypothesis of the Harnack inequality. In particular%
\[
\sup_{B_{R}(0)}f\leq C\inf_{B_{R}(0)}f
\]
for every ball $B_{R}(0)$ with a fixed constant $C.$ \ \ Taking $R\rightarrow
\infty$ gives $\sup f=0.$

In fact, we can state a slightly more general theorem.

\begin{theorem}
Suppose that $F(D^{2}u)$ is an elliptic functional, and let $g$ be the induced
metric on the gradient graph. \ If $u$ is semi-convex and
\[
\Delta_{g}F(D^{2}u)=0
\]
then
\[
F(D^{2}u)=\text{const. }%
\]

\end{theorem}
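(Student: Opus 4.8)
The plan is to run the argument of Theorem 1 essentially verbatim, with the bounded harmonic function replaced by $f=F(D^{2}u)$; the one genuinely new ingredient is a one-sided bound on $f$ extracted from semiconvexity together with the ellipticity of $F$. First I would reuse the Yuan rotation from the proof of Theorem 1: since $u$ is semiconvex there is a constant $M$ with $D^{2}u+MI_{n}\geq 0$, and with $\delta=\frac{\pi}{2}-\arctan M$ the map $G(x)=\left(T^{-1}(x),\,Du\circ T^{-1}\right)$ is a diffeomorphism of $\mathbb{R}^{n}$ onto $\Gamma$ under which the pulled-back metric obeys $I_{n}\leq g\leq (1+M_{0}^{2})I_{n}$. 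Consequently $\Delta_{g}$ is, in these coordinates, uniformly elliptic in divergence form, so $\tilde{f}:=f\circ G$ is a global weak solution of $\partial_{j}\!\left(\sqrt{\det g}\,g^{ij}\partial_{i}\tilde{f}\right)=0$ on $\mathbb{R}^{n}$ with $R$-independent ellipticity constant.

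The new step is to bound $f=F(D^{2}u)$ below. By ellipticity of the functional, the linearized coefficients $F^{ij}=\partial F/\partial (D^{2}u)_{ij}$ form a positive semidefinite matrix, which is exactly monotonicity of $F$ in the ordering of symmetric matrices: for $A\geq B$ one writes $F(A)-F(B)=\int_{0}^{1}F^{ij}\!\left(B+t(A-B)\right)(A-B)_{ij}\,dt\geq 0$, the integrand being $\mathrm{tr}$ of a product of two positive semidefinite matrices. Applying this with $A=D^{2}u\geq -MI_{n}=B$ gives, pointwise, $F(D^{2}u)\geq F(-MI_{n})=:c_{0}$, a fixed constant. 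Hence $f-c_{0}\geq 0$ everywhere.

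Finally I would reproduce the Harnack-scaling argument. Setting $h=f-c_{0}\geq 0$ and normalizing so that $\inf_{\mathbb{R}^{n}}h=0$, the pullback $\tilde{h}=h\circ G$ is a nonnegative solution of the same uniformly elliptic divergence equation. The rescalings $\tilde{h}_{R}(x)=\tilde{h}(x/R)$ satisfy the hypotheses of the De Giorgi--Nash--Moser Harnack inequality recalled above with the same ellipticity constant, so $\sup_{B_{R}}\tilde{h}\leq C\inf_{B_{R}}\tilde{h}$ for every $R$. Letting $R\rightarrow\infty$, the right-hand side tends to $C\inf_{\mathbb{R}^{n}}\tilde{h}=0$, forcing $\tilde{h}\equiv 0$ and therefore $F(D^{2}u)\equiv c_{0}$.

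The step I expect to be most delicate is the lower bound on $F(D^{2}u)$, since it depends on pinning down precisely the sense in which $F$ is an ``elliptic functional'' and verifying that this yields monotonicity in the matrix ordering. It is worth emphasizing that even degenerate ellipticity ($F^{ij}\geq 0$) suffices for the inequality $F(D^{2}u)\geq F(-MI_{n})$, and that no upper bound on $f$ is needed: the Harnack inequality itself supplies the upper control on each ball, so the one-sided bound is all that the scaling argument requires. The remainder is a direct transcription of the proof of Theorem 1.
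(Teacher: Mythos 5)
Your proposal is correct and takes essentially the same route as the paper: the paper likewise uses semiconvexity plus ellipticity (monotonicity of $F$ in the matrix ordering) to get the one-sided bound $F(D^{2}u)\geq F(MI_{n})>-\infty$, and then invokes Theorem 1, whose proof---as you correctly observed---only needs a lower bound, not two-sided boundedness, for the Harnack scaling argument. Your write-up simply makes explicit the monotonicity identity and the rescaled Harnack step that the paper compresses into ``the result follows immediately from our main theorem.''
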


\begin{proof}
If $u$ is semiconvex, then the exists a value $M$ such that
\[
D^{2}u-MI_{n}\geq0.
\]
It follows by ellipticity that
\[
F(D^{2}u)\geq F(MI_{n})>-\infty.
\]
The result follows immediately from our main theorem. \ 
\end{proof}

\bigskip

\subsection{Proof of Theorem 2}

The function $\theta$ is odd in $u$, so we need only show that
\begin{equation}
\theta(u)>\left(  n-2\right)  \frac{\pi}{2}+\delta
\end{equation}
implies that $u$ is semiconvex. \ If
\[
\lambda_{i}<\arctan(\delta-\frac{\pi}{2})
\]
then we must have
\[
\sum_{i\neq j}\arctan\lambda_{j}>\left(  n-2\right)  \frac{\pi}{2}%
+\delta-(\delta-\frac{\pi}{2})=\left(  n-1\right)  \frac{\pi}{2}%
\]
which is clearly a contradicton as%
\[
\arctan\lambda_{j}\leq\frac{\pi}{2}.
\]
We conclude that
\[
D^{2}u\geq\arctan(\delta-\frac{\pi}{2})
\]
and $u$ is semiconvex. \ Thus $\theta(u)=$ const, by Theorem 1. \ \ The result
follows by the main results in \cite{YYI}\cite{YY06}.

\bigskip

\bibliographystyle{amsalpha}
\bibliography{hamstat}

\end{document}